\newtheorem{theorem}{Theorem}[section]
\newtheorem{lemma}[theorem]{Lemma}
\newtheorem{corollary}[theorem]{Corollary}
\newcommand*{\rom}[1]{\expandafter\@slowromancap\romannumeral #1@}
\theoremstyle{definition}
\newtheorem{remark}[theorem]{Remark}
\newcounter{minutes}\setcounter{minutes}{\time}
\newcounter{hours}\setcounter{hours}{\time}
\numberwithin{equation}{section}
\begin{document}
	
\bibliographystyle{amsplain}
	
\title[Geometric properties]
{Geometric properties of certain integral operators involving Hornich operations}
	
\def\thefootnote{}
\footnotetext{ \texttt{\tiny File:~\jobname .tex,
	printed: \number\day-\number\month-\number\year,
\thehours.\ifnum\theminutes<10{0}\fi\theminutes}
} \makeatletter\def\thefootnote{\@arabic\c@footnote}\makeatother
\author[Shankey Kumar]{Shankey Kumar}
\address{Shankey Kumar, School of Mathematical Sciences, National Institute of Science Education and Research Bhubaneswar, Khurda 752050, Odisha, India.}
\email{shankeygarg93@gmail.com, shankeygarg22@niser.ac.in}

\subjclass[2010]{Primary: 47B38, 30C45; Secondary: 30C55}
\keywords{ Integral operator, Hornich operations, Univalent, Close-to-Convex, Convex, Pre-Schwarzian norm.}
	
\begin{abstract}
In this article, we investigate some standard  geometric properties of the  integral operators
$$
J_\alpha [f](z)= \int_{0}^{z}\bigg(\frac{f(w)}{w}\bigg)^\alpha dw, \,\,\, \alpha \in \mathbb{C} \text{ and } |z|<1,
$$
and
$$
I_\beta [g](z)= \int_{0}^{z}\big(g'(w)\big)^\beta dw, \,\,\, \beta \in \mathbb{C} \text{ and } |z|<1,
$$
where $f$ and $g$ are elements of certain classical families of normalized analytic functions defined on the unit disk. 
In particular,  preserving properties of the Hornich sum of the operators $J_\alpha$ and $I_\beta$ will be studied. Moreover, we also present sharp pre-Schwarzian norm estimate of such integrals.  
\end{abstract}
	
\maketitle
\section{Introduction}
Let $\mathcal{A}$ be the class of functions $f (z) = z+ a_2 z^2+ \dots$, analytic and normalized in the unit disk $\mathbb{D}:=\{z\in\mathbb{C}:|z|<1\}$. We denote by $\mathfrak{F}$ the class of all those analytic functions $f\in\mathcal{A}$ satisfying $f'(z)\neq 0, z\in \mathbb{D}$. The subclass of $\mathfrak{F}$ consisting of all univalent functions is denoted by $\mathcal{S}$. Here we consider some classical subclasses of $\mathcal{S}$.
	
If a function $f\in \mathcal{S}$ maps $\mathbb{D}$ onto a convex domain then $f$ is called a convex function. The notation $\mathcal{K}$ is defined for the class of convex functions. A function $f\in \mathcal{S}$ is said to be close-to-convex if there is a function $g\in\mathcal{K}$ and a real number $\alpha\in(-\pi/2,\pi/2)$ such that
$$
{\rm Re}\,\bigg(e^{i\alpha}\frac{f'(z)}{g'(z)}\bigg)>0, \,\, z\in\mathbb{D},
$$
see \cite[Vol. 2, p.~2]{GMBook}. The notation $\mathcal{C}$ stands for the class of close-to-convex functions. By the definition, it is clear that $\mathcal{K}\subsetneq \mathcal{C}$. In 1952, Kaplan in \cite{Kaplan52} proved that a function $f\in\mathfrak{F}$ is close-to-convex if and only if
\begin{equation}\label{4eq1.1}
\int_{\theta_1}^{\theta_2}  {\rm Re}\,\bigg(\frac{zf''(z)}{f'(z)}+1\bigg)d\theta > -\pi, \,\,\ z=re^{i\theta},
\end{equation}
for each $0<r<1$ and for each pair of real numbers $\theta_1$ and $\theta_2$ with $\theta_1<\theta_2$; see \cite{Duren83,GMBook,MM-Book} for more information. In this sequence, we have some important subclasses of $\mathfrak{F}$, which were widely used by many authors for different prospective.

\subsection{The class $\mathcal{G}(\gamma)$} A function $f\in \mathfrak{F}$ belongs to the class $\mathcal{G}(\gamma)$, $\gamma>0$, if it satisfies the condition 
$$
1+\frac{zf''(z)}{f'(z)}<1+\frac{\gamma}{2}, \,\,\ z\in\mathbb{D}.
$$
The class $\mathcal{G}:=\mathcal{G}(1)$ was first introduced by Ozaki \cite{Ozaki41} and proved the inclusion relation $\mathcal{G}\subset \mathcal{S}$. Also, Umezawa \cite{Ume52} studied this class and showed that this class contains the class of functions convex in one direction.
The Taylor coefficient problem for the class $\mathcal{G}(\gamma)$, $0<\gamma\leq 1$, is discussed in \cite{Obradovic13}. Recently, the radius of convexity of the functions in the class $\mathcal{G}(\gamma)$, $\gamma>0$, is obtained in \cite{Shankey20}. More information about the class $\mathcal{G}(\gamma)$, $\gamma>0$, can be found in \cite{ponnusamy95,ponnusamy96,ponnusamy07}.

\subsection{The class $\mathcal{S}_{\theta}(\gamma)$}
We consider the class
\begin{equation}\label{4eq1.1}
\mathcal{S}_{\theta}(\gamma)=\Bigg\lbrace f\in\mathcal{A}:{\rm Re}\,\bigg( e^{i\theta} \frac{zf'(z)}{f(z)}\bigg)<\Big(1+\frac{\gamma}{2}\Big) \cos \theta, \,\,\ z\in\mathbb{D}\Bigg\rbrace
\end{equation}
where $\gamma>0$ and $-\pi/2<\theta<\pi/2$. The class $\mathcal{S}_{\theta}(\gamma)$ is a non-empty set since the function $f(z)=z$ satisfy the condition \eqref{4eq1.1}. Also, we introduce a non-trivial example in the final section. It is easy to observe that if $g(z)=zf'(z)$ then $f\in \mathcal{G(\gamma)}$ if and only if $g \in \mathcal{S}_{0}(\gamma)$ for every $\gamma>0$. For $\theta=0$ and $\gamma=1$, the class $\mathcal{S}_{\theta}(\gamma)$ is introduced in \cite{Shah71}, and recently studied in \cite{Shankey20}. In \cite{Shah71}, Shah pointed out that the function of the class $\mathcal{S}_0(1)$ is starlike in one direction. Robertson \cite{Robertson36} has proved that if $g(z)=z+\sum_{n=2}^{\infty}a_n z^n$ is starlike in one direction for $|z|<1$ than $|a_n|\leq n^2$.
We set $\mathcal{S}(\gamma):=\bigcup_\theta\,\mathcal{S}_{\theta}(\gamma)$.
	
\subsection{The class $\mathcal{K}(\lambda)$} Consider the class $\mathcal{K}(\lambda)$, $\lambda<1$, which is expressed as
$$
\mathcal{K}(\lambda)=\bigg\lbrace f\in \mathfrak{F}:{\rm Re}\,\bigg(1+\frac{zf''(z)}{f'(z)}\bigg)>\lambda, \,\,\ z\in\mathbb{D}\bigg\rbrace.
$$
It is evident that $\mathcal{K}=\mathcal{K}(0)$.
By using \eqref{4eq1.1} we can easily observe that if $f\in\mathcal{K}(-1/2)$ then $f\in \mathcal{C}$. Note that the class $\mathcal{K}(\lambda)$, $-1/2\leq \lambda<1$, is introduced, for instance, in \cite{LPQ16} (see also \cite{Arora18,Ali18}).
In \cite{PSY14}, Ponnusamy et al. obtained that every section of a function in the class $\mathcal{K}(-1/2)$ is convex in the disk
$|z| < 1/6$, and the quantity $1/6$ is best possible. Further, radius of convexity for functions in the class $\mathcal{K}(\lambda)$, $\lambda<1$, calculated in \cite{Shankey20}. This class has been considered by several authors on different counts (see \cite{ADRS15,D13,G81,Shankey19} and references therein).
	
\medskip	
Now, we discuss some familiar operations in geometric function theory. One of them is the Hornich sum of functions $f,g\in\mathfrak{F}$ defined as
$$
(f\oplus g)(z)=\int_{0}^{z}f'(w)g'(w)dw.
$$  
Another one is the Hornich scalar multiplication operation (or operator)
$$
I_\beta[g](z):=(\beta \star g)(z)=\int_{0}^{z} (g'(w))^\beta dw, 
$$
where $f\in \mathfrak{F}$, $\beta \in \mathbb{C}$ and $|z|<1$. Here, the choice for the branch of $(g'(w))^\beta$ has been taken in such a way that $(g'(0))^\beta=1$. It clearly follows that $I_\alpha I_\beta = I_{\alpha \beta}$. It is easy to check that the class $\mathfrak{F}$ forms a vector space over $\mathbb{C}$ under the Hornich operations
(the Hornich sum and the  Hornich scalar multiplication operation). 
The Hornich operations can be used to obtain many important integral operators. In the sequel, the following definition due to Kim and Merkes \cite{Kim72} is useful for our main results:
\begin{equation}\label{AF}
	A(\mathfrak{F})=\{\alpha \in \mathbb{C}: I_\alpha(\mathfrak{F})\subset \mathcal{S}\},
\end{equation}
here the notation $I_\alpha(\mathfrak{F})$ is defined by
\begin{equation}\label{IgammaF}
	I_\alpha(\mathfrak{F})=\{I_\alpha[f]:\,f\in \mathfrak{F}\}.
\end{equation}
Recall that the inclusion $\{\alpha:\,|\alpha|\le 1/2\}\subset A(\mathcal{K})$ was first proved by 
Singh and Chichra in \cite{Singh77}. 
Further, the inclusion $[0,3/2]\subset A(\mathcal{K})$ was due to  Nunokawa \cite{Nunokawa69}. In continuation to this analysis, Merkes \cite{Merkes85} proposed the conjecture that $\{\alpha\in\mathbb{C}:\,|\alpha-1|\le 1/2\}\subset A(\mathcal{K})$.
However, Aksent'ev and Nezhmetdinov
\cite{Aksent'ev82} disproved the conjecture of Merkes by showing that 
\begin{equation}\label{AK}
A(\mathcal{K})=\{\alpha\in\mathbb{C}:\,|\alpha|\le 1/2\}\cup[1/2,3/2]
\end{equation}
(see also \cite{Kimponnusamy04}).
Recently, Kumar and Sahoo \cite{Shankey19} extend this result and obtained that
$$
A(\mathcal{K(\lambda)})=\left\{\alpha\in\mathbb{C}:\,|\alpha|\le \frac{1}{2(1-\lambda)}\right\}\bigcup \left[\frac{1}{2(1-\lambda)},\frac{3}{2(1-\lambda)}\right],\, \mbox{ for } \lambda<1.
$$ 
Now if we put $\lambda=-1/2$ in the above set then we have 
$$
A(\mathcal{K}(-1/2))=\left\{\alpha\in\mathbb{C}:\,|\alpha|\le \frac{1}{3}\right\}\bigcup\Big[\frac{1}{3},1\Big].
$$
It is here appropriate to recall that,
in one hand, due to Pfaltzgraff as shown in \cite[Corollary~1]{Pfa75} 
$I_\alpha(\mathcal{S})\subset \mathcal{S}$ for $|\alpha|\le 1/4$. On the other hand, Royster proved in \cite[Theorem~2]{Royster65} that for each number $\alpha\neq 1$ with $|\alpha|>1/3$,
there exists a function $f\in\mathcal{S}$ such that $I_\alpha[f]\not\in\mathcal{S}$ (see also \cite{Ali18,Kim04,KS06}).
Note that the description of the whole set $A(\mathcal{S})$ is still open.

In 1974, Kim and Merkes \cite{Kim74} studied the operator 
$$
I_{\alpha,\beta}[f,g](z):= \big( I_\alpha[f] \oplus I_\beta [g]\big)(z)=\int_{0}^{z}(f'(w))^\alpha (g'(w))^\beta dw, \,\,\, \alpha,\beta \in \mathbb{R} \text{ and } |z|<1,
$$
defined on $f,g\in\mathfrak{F}$. By the definition of the operator $I_{\alpha,\beta}$ it is clear that this is a combination of the Hornich operations. One of the interesting results obtained in \cite{Kim74} for the operator $I_{\alpha,\beta}$ is the following:
	
\medskip
\noindent  
{\bf Theorem A.} {\em Let $f,g\in \mathcal{K}$. For the real numbers $\alpha$ and $\beta$, we have 
\begin{enumerate}
\item[\bf (i)] 
$I_{\alpha,\beta}[f,g] \in \mathcal{K}$ if and only if $\alpha\geq0,\beta\geq 0, \alpha+\beta\leq1$. 
\item[\bf (ii)] 
$ I_{\alpha,\beta}[f,g] \in \mathcal{C}$ if and only if $-1/2\leq\alpha,\beta\leq 3/2 , -1/2\leq\alpha+\beta\leq3/2$.
\end{enumerate}}
	
\medskip
Theorem~A(i) says that if there exist positive $\alpha$ and $\beta$ satisfying $\alpha+\beta>1$ or at least one of them is negative, then $I_{\alpha,\beta}[f,g]$ is no more in $\mathcal{K}.$ This means that if we replace the term ``{\em if and only if}" with ``{\em if}" in  Theorem~A(i), then the result would be called sharp. Same concept is applied for similar other results. 
Further, Theorem A has been extended in \cite{Ali18}
by replacing $\mathcal{K}$ with 
$\mathcal{K(\lambda)}$, $-1/2\leq\lambda<1$, and $\mathcal{G}(\gamma)$, $0<\gamma\leq1$, separately.
	
In 1972, Kim and Merkes \cite{Kim72} considered the nonlinear operator $J_\alpha[f]$, $f\in\mathfrak{F}$ such that $f(0)=0$, defined by 
$$
J_\alpha[f](z):=(I_\alpha[J[f]])(z)=\int_{0}^{z}\bigg(\frac{f(w)}{w}\bigg)^\alpha dw, \,\,\, \alpha \in \mathbb{C} \text{ and } |z|<1,	
$$
and they showed that $J_\alpha(\mathcal{S})\subset \mathcal{S}$ for $|\alpha|\leq 1/4$, i.e. $A(J(\mathcal{S}))=\{\alpha\in\mathbb{C}:\,|\alpha|\le 1/4\}$.
If $\alpha=1$, then one has the well-known Alexander transformation $J[f]:=J_1[f]$ which is defined by
$$
J[f](z)=\int_{0}^{z}\frac{f(w)}{w} dw, \,\,\,  |z|<1.	
$$
We know that the class $\mathcal{S}$ does not preserve by the Alexander transform, see \cite[$\S8.4$]{Duren83}. This motivates us to study 
the classical classes of functions under
the Alexander and related transforms considered 
in this paper.
We use the following notation concerning the Alexander operator $J$: 
\begin{equation}\label{JF}
	J(\mathfrak{F})=\{J[f]:\,f\in\mathfrak{F}\}.
\end{equation}
By the definitions \eqref{AF} and \eqref{IgammaF} we formulate
$$
A\big(J(\mathfrak{F})\big)=\{\alpha \in \mathbb{C}: J_\alpha(\mathfrak{F})\subset \mathcal{S}\}
~~\mbox{ and }~~
J_\alpha(\mathfrak{F})=(I_\alpha\circ J)(\mathfrak{F}).
$$ 
For the starlike family $\mathcal{S}^*$, Singh and Chichra in \cite{Singh77} proved that $A(J(\mathcal{S}^*))\supset \{\alpha\in\mathbb{C}:\,|\alpha|\leq 1/2\}$. 
However, as noted in \eqref{AK}, the complete range of $\alpha$ for $A(J(\mathcal{S}^*))$ was found by Aksent'ev and Nezhmetdinov \cite{Aksent'ev82}, since 
$J(\mathcal{S}^*)=\mathcal{K}$. More interestingly, for a given $\tau>0$,
Kim et al. \cite{Kim04} could generate a subclass $\mathcal{F}$ of $\mathcal{A}$ such that   
$J_\alpha(\mathcal{F})\subset \mathcal{S}$ for all $\alpha\in\mathbb{C}$ with $|\alpha|\leq\tau$.

	In \cite{Kim74}, authors also studied the operator, for $f,g\in\mathfrak{F}$,
	$$
	J_{\alpha,\beta}[f,g](z):=\big( J_\alpha[f] \oplus J_\beta [g]\big)(z)=\int_{0}^{z}\bigg(\frac{f(w)}{w}\bigg)^\alpha \bigg(\frac{g(w)}{w}\bigg)^\beta dw, \,\,\, \alpha,\beta \in \mathbb{R} \text{ and } |z|<1.
	$$
	This can be easily generated with the help of the Alexander transformation and the Hornich operations. Corresponding to the operator $J_{\alpha,\beta}$ they have the following result:
	
	\medskip
	\noindent
	{\bf Theorem B.} {\em Let $f,g\in \mathcal{K}$. For the real quantities $\alpha$ and $\beta$, we have 
		\begin{enumerate}
			\item[\bf (i)] $ J_{\alpha,\beta}[f,g] \in \mathcal{K}$ if and only if $\alpha\geq0,\beta\geq 0, \alpha+\beta\leq2$. 
			\item[\bf (ii)]
			$ J_{\alpha,\beta}[f,g] \in \mathcal{C}$ if and only if $-1\leq\alpha,\beta\leq 3 , -1\leq\alpha+\beta\leq3$.
	\end{enumerate}}
	
	\medskip
	Recently, the sharp radii of convexity for the integral operator
	$$
	C_{\alpha,\beta}[f,g](z)=\int_{0}^{z}\bigg(\frac{f(w)}{w}\bigg)^\alpha (g'(w))^\beta d w, \,\,\, \alpha,\beta \in \mathbb{R} \text{ and } |z|<1,
	$$
	over subclasses of the class $\mathfrak{F}$ investigated in \cite{Shankey20}.
	This operator can be obtained by replacing $f'(w)$ with $(J[f])'(w)$ in $I_{\alpha,\beta}[f,g]$ or $(J[g])'(w)$ with $g'(w)$ in $J_{\alpha,\beta}[f,g]$. Here, we choose branches of $(f(z)/z)^\alpha$ and $(g'(w))^\beta$ such that $(f'(0))^\alpha=1=(g'(0))^\beta$. In other words, the above operators are related by
	\begin{equation}\label{4eq1.2}
		C_{\alpha,0}[f,g]\equiv J_{\alpha,0}[f,g]
		\equiv C_{\alpha,\beta}[f,z]\equiv J_{\alpha,\beta}[f,z]
	\end{equation}
	and
\begin{equation}\label{4eq1.3}
C_{0,\beta}[f,g]\equiv I_{0,\beta}[f,g]\equiv 
C_{\alpha,\beta}[z,g]\equiv I_{\alpha,\beta}[z,g].
\end{equation}
The operator $C_{\alpha,\beta}$ can be easily obtain by the Hornich sum of the operators $J_\alpha$ and $I_\beta$ as
$$
C_{\alpha,\beta}[f,g](z)= \big( J_\alpha[f] \oplus I_\beta [g]\big)(z).
$$
The operator $C_{\alpha,\beta}$ contains several well-known operators, simultaneously. Also, we can obtain many known results with the help of this operator $C_{\alpha,\beta}$. For $f=g$, certain geometric properties of $C_{\alpha,\beta}$ have been studied in \cite{Dorff05,Frasin11,Godula79,GMBook}.
	
The organization of this paper as follows:
throughout the paper we assume $-\pi/2<\theta<\pi/2$, $\gamma>0$ and $\lambda<1$.
In the second section, we compute the sets $A(\mathcal{G}(\gamma))$, $A\big(J(\mathcal{S}_\theta(\gamma))\big)$ and $A\big(J(\mathcal{S}(\gamma))\big)$. Also, in the same section we give a restriction on $\theta$ under which the class $J(\mathcal{S}_\theta(\gamma))$ becomes a subclass of the class $\mathcal{S}$. The third section contains several results concerning the operator $C_{\alpha,\beta}$, and their important consequences. 
In the final section, we estimate the sharp bound of pre-Schwarzian norm of range set $J(\mathcal{S}_\theta(\gamma))$ and concludes that every function in the class $J(\mathcal{S}_\theta(\gamma))$, $-\pi/2<\theta<\pi/2$ and $0<\gamma<1$, is a bounded function.
	
\section{The Alexander transform}
	
We begin with the following important lemma proved in \cite{Ali18}, for $0<\gamma\leq 1$. 
\begin{lemma}\label{4lemma2.10}
Let $0<\gamma$. Then $\mathcal{G}(\gamma)=(-\gamma/2)\star \mathcal{K}$.
\end{lemma}
\begin{proof}
Let the mapping $\xi: \mathfrak{F} \longrightarrow \mathfrak{F}$ be defined as $\xi(f)=(-\gamma/2)\star f$, where  $\gamma>0$. Suppose $\xi(f)=g$ then it is easy to compute that $g'=(f')^{(-\gamma/2)}$. Thus we obtain
$$
\frac{zg''(z)}{g'(z)}+1=-\frac{\gamma}{2}\bigg[\frac{zf''(z)}{f'(z)}+1\bigg]+\frac{\gamma}{2}+1.
$$
It follows that $f\in \mathcal{K}$ if and only if $g\in \mathcal{G}(\gamma)$, which leads to the fact that $\xi(\mathcal{K})=\mathcal{G}(\gamma)$.
\end{proof}
For $z,w\in\mathbb{C}$, the line segment joining $z$ and $w$  denote by $[z,w]$.
The proof of the following theorem provides by Aksent'ev and Nezhmetdinov \cite{Aksent'ev82} and Kim et al. \cite{Kimponnusamy04}. 
\begin{theorem}\label{5theorem2.2}
We have the set $A(\mathcal{K})=\{\alpha\in\mathbb{C}:\,|\alpha|\le 1/2\}\cup[1/2,3/2]$.
\end{theorem}
By using Lemma \ref{4lemma2.10} and Theorem \ref{5theorem2.2} we conclude the following result. 
\begin{theorem}\label{5theorem2.3}
Let $0<\gamma$. Then the set $A(\mathcal{G(\gamma)})=\{\alpha\in\mathbb{C}:\,|\alpha|\le 1/\gamma\}\cup[-3/\gamma,-1/\gamma]$.
\end{theorem}
\begin{proof}
The Lemma \ref{4lemma2.10} provides that for every $f\in \mathcal{G}(\gamma)$, $\gamma>0$, there exists a function $g \in \mathcal{K}$ such that $f(z)=((-\gamma/2)\star g)(z)$.  Then we obtain that $I_\alpha[f]=I_{-\gamma\alpha/2}[g]$ for a function $g\in\mathcal{K}$. The final answer provides by Theorem \ref{5theorem2.2}.
\end{proof}
To conclude our next main result and its consequences we need the following lemma.
\begin{lemma}\label{4lemma2.4}
For $-\pi/2<\theta <\pi/2$ and $0<\gamma$, we have
$$
J(\mathcal{S}_{\theta}(\gamma))=I_{e^{-i\theta}\cos\theta}(\mathcal{G}(\gamma)).
$$
\end{lemma}
\begin{proof}
From the expression
\begin{equation}\label{5eq2.1}
\frac{1}{\cos\theta}\Bigg[e^{i\theta}\bigg(\frac{zf''(z)}{f'(z)}+1\bigg)-i \sin\theta\Bigg] = 1+\frac{zk''(z)}{k'(z)}
\end{equation}
and for $J[g]=f$ we can easily observe that $g\in \mathcal{S}_{\theta}(\gamma)$ if and only if $k\in \mathcal{G}(\gamma)$.
		
After simplification of \eqref{5eq2.1} we obtain that 
$$
\frac{f''(z)}{f'(z)}=e^{-i\theta}\cos\theta \frac{k''(z)}{k'(z)},
$$
which gives $f=I_{e^{-i\theta}\cos \theta}[k]$. Hence the proof is complete.
\end{proof}
The above Lemma \ref{4lemma2.10} and Theorem \ref{4lemma2.4} leads to the following theorem:
	
\begin{theorem}\label{4theorem2.5}
For $-\pi/2<\theta<\pi/2$ and $0<\gamma$, we have 
$$
A(J(\mathcal{S}_{\theta}(\gamma)))=\{\alpha\in\mathbb{C}: |\alpha|\leq 1/\gamma\cos\theta\}\cup[-3e^{i\theta}/\gamma\cos\theta,-e^{i\theta}/\gamma\cos\theta].
$$
\end{theorem}
\begin{proof} The property $I_\delta I_\alpha= I_{\delta \alpha}$ together with Lemma \ref{4lemma2.10} gives
$$
J_\alpha(\mathcal{S}_{\theta}(\gamma))=	I_\alpha\big(J(\mathcal{S}_{\theta}(\gamma))\big)=I_\alpha I_{e^{-i\theta}\cos\theta}(\mathcal{G}(\gamma))=I_{\alpha e^{-i\theta}\cos\theta}(\mathcal{G}(\gamma)).
$$
Then, by Theorem \ref{5theorem2.2}, $I_{\alpha e^{-i\theta}\cos\theta}(\mathcal{G}(\gamma))\subset \mathcal{S}$ if and only if $|\alpha|\leq 1/\gamma\cos\theta$ or $\alpha\in[-3e^{i\theta}/\gamma\cos\theta,-e^{i\theta}/\gamma\cos\theta]$. This concludes the proof.
\end{proof}
In the next theorem, we find a restriction on $\theta$ for which the image set $J(\mathcal{S}_{\theta}(\gamma))$ in the class $\mathcal{S}$. 
	
\begin{theorem}\label{4theorem2.6}
Let $0<\gamma$. The relation
$$J(\mathcal{S}_{\theta}(\gamma))\subset \mathcal{S}$$
holds precisely for $\cos\theta\leq1/\gamma$.	
\end{theorem}
\begin{proof}
From Theorem \ref{4theorem2.5} we have $J\big(\mathcal{S}_{\theta}(\gamma)\big)\subset \mathcal{S}$ if and only if $1\in A\big(J\big(\mathcal{S}_{\theta}(\gamma))\big)$. This gives that $\cos \theta \leq 1/\gamma$, completing the proof.	
\end{proof}

\begin{remark}
We remark that functions in $\mathcal{S}_{\theta}(\gamma)$, where $0<\gamma\leq 1$ and $-\pi/2<\theta<\pi/2$, may not be univalent but image set of this class under the Alexander transform is a subclass of univalent functions.
\end{remark}
	

\begin{theorem}
For $\gamma>0$, we have
$$
A\big(J(\mathcal{S}(\gamma))\big)=\bigg\lbrace |\alpha|\leq\frac{1}{\gamma}\bigg\rbrace.
$$	
\end{theorem}
\begin{proof}
By the definition of $\mathcal{S}(\gamma)$, we have 
$$
A\big(J(\mathcal{S}(\gamma))\big)=\underset{\theta}{\bigcap}\,	A\big(J\big(\mathcal{S}_{\theta}(\gamma)\big)\big).
$$
Finally, Theorem \ref{4theorem2.5} concludes the theorem. 
\end{proof}
\section{The operator $C_{\alpha,\beta}$}	
The following theorem characterizes the set in  $\alpha\beta$-plane for which the operators $C_{\alpha,\beta}[f,g]$ either belong to $\mathcal{K(\lambda)}$ or $\mathcal{G(\gamma)}$ or $\mathcal{C}$, whenever $f,g\in\mathfrak{F}$. 
	
\begin{theorem}\label{4theorem2.1}
Let $H$ be a set in $\mathbb{R}^2$.
For $\lambda<1$, $\gamma>0$ and $(\alpha,\beta)\in H$, if $C_{\alpha,\beta}[f,g] \in \mathcal{K(\lambda)}$ or $\mathcal{G(\gamma)}$ or $\mathcal{C}$, then $H$ is a convex set.
\end{theorem}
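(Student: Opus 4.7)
The plan is to reduce convexity of $H$ to the elementary fact that a linear inequality on the real part of an affine function of $(\alpha,\beta)$ is preserved under convex combinations.

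First, I would compute the quantity that governs membership in each of the three classes. Since $C_{\alpha,\beta}[f,g]'(z)=(f(z)/z)^{\alpha}(g'(z))^{\beta}$, logarithmic differentiation gives
$$1+\frac{z\,C_{\alpha,\beta}[f,g]''(z)}{C_{\alpha,\beta}[f,g]'(z)} \;=\; 1+\alpha\!\left(\frac{zf'(z)}{f(z)}-1\right)+\beta\,\frac{zg''(z)}{g'(z)} \;=:\; L_{\alpha,\beta}(z).$$
For fixed $f,g\in\mathfrak{F}$ and fixed $z\in\mathbb{D}$, the key observation is that $L_{\alpha,\beta}(z)$ is an affine function of $(\alpha,\beta)\in\mathbb{R}^{2}$; concretely, for any $t\in[0,1]$ and any pairs $(\alpha_{1},\beta_{1}),(\alpha_{2},\beta_{2})\in\mathbb{R}^{2}$,
$$L_{t(\alpha_{1},\beta_{1})+(1-t)(\alpha_{2},\beta_{2})}(z) \;=\; t\,L_{\alpha_{1},\beta_{1}}(z)+(1-t)\,L_{\alpha_{2},\beta_{2}}(z).$$

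Next, I would translate membership in each class into a condition on $\mathrm{Re}\,L_{\alpha,\beta}$. One has $C_{\alpha,\beta}[f,g]\in\mathcal{K}(\lambda)$ exactly when $\mathrm{Re}\,L_{\alpha,\beta}(z)>\lambda$ for all $z\in\mathbb{D}$; $C_{\alpha,\beta}[f,g]\in\mathcal{G}(\gamma)$ exactly when $\mathrm{Re}\,L_{\alpha,\beta}(z)<1+\gamma/2$ on $\mathbb{D}$; and by Kaplan's criterion~\eqref{4eq1.1}, $C_{\alpha,\beta}[f,g]\in\mathcal{C}$ exactly when $\int_{\theta_{1}}^{\theta_{2}}\mathrm{Re}\,L_{\alpha,\beta}(re^{i\theta})\,d\theta>-\pi$ for every $0<r<1$ and every $\theta_{1}<\theta_{2}$.

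With these ingredients, convexity of $H$ is immediate in each of the three scenarios. I would fix $(\alpha_{1},\beta_{1}),(\alpha_{2},\beta_{2})\in H$ and $t\in[0,1]$, apply $\mathrm{Re}$ (and, in the Kaplan case, the integral $\int_{\theta_{1}}^{\theta_{2}}(\cdot)\,d\theta$) to the affine identity displayed above, and note that a convex combination of two real numbers each $>\lambda$ (respectively $<1+\gamma/2$, respectively $>-\pi$) still satisfies the same strict inequality. Hence $t(\alpha_{1},\beta_{1})+(1-t)(\alpha_{2},\beta_{2})\in H$.

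There is essentially no hard step: once the affine representation of $L_{\alpha,\beta}$ is in hand, the argument is a one-line linearity and monotonicity remark. The only preliminary worth checking is that the branches of $(f(z)/z)^{\alpha}$ and $(g'(z))^{\beta}$, normalized so that they equal $1$ at the origin, depend linearly on the exponents at the level of logarithms, so that the logarithmic-derivative computation is unambiguous; this is automatic because $f(z)/z$ and $g'(z)$ are zero-free on $\mathbb{D}$ and thus admit well-defined holomorphic logarithms there.
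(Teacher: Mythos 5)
Your proposal is correct and follows essentially the same route as the paper: the identity you derive for $1+zC_{\alpha,\beta}[f,g]''(z)/C_{\alpha,\beta}[f,g]'(z)$ as an affine function of $(\alpha,\beta)$ is exactly the paper's equation \eqref{4eq2.1} (the paper phrases it via the Hornich operations $t\star(\cdot)\oplus(1-t)\star(\cdot)$, you via direct logarithmic differentiation), and the conclusion in all three cases is the same linearity-plus-monotonicity remark, with Kaplan's criterion handling the close-to-convex case.
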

\begin{proof}
Suppose that $(\alpha,\beta)$ is a locus of point on the line segment $[(\alpha_1,\beta_1),(\alpha_2,\beta_2)]$ joining $(\alpha_1,\beta_1)$
		and $(\alpha_2,\beta_2)$.
		Then it is easy to obtain that
		$$
		C_{\alpha,\beta}[f,g](z)=(t\star C_{\alpha_1,\beta_1}[f,g]\oplus (1-t)\star C_{\alpha_2,\beta_2}[f,g])(z),\quad \mbox{ for $0\le t\le 1$.}
		$$
		A simple computation shows that 
		\begin{equation}\label{4eq2.1}
			1+\frac{zC_{\alpha,\beta}[f,g]''(z)}{C_{\alpha,\beta}[f,g]'(z)} =t\bigg\lbrace1+\frac{zC_{\alpha_1,\beta_1}[f,g]''(z)}{C_{\alpha_1,\beta_1}[f,g]'(z)}\bigg\rbrace+(1-t)\bigg\lbrace1+\frac{zC_{\alpha_2,\beta_2}[f,g]''(z)}{C_{\alpha_2,\beta_2}[f,g]'(z)}\bigg\rbrace.
		\end{equation}
		The conditions $C_{\alpha_i,\beta_i}[f,g] \in \mathcal{K(\lambda)}$, $i=1,2$, in \eqref{4eq2.1} give
		$$
		{\rm Re}\,\bigg \lbrace 1+\frac{zC_{\alpha,\beta}[f,g]''(z)}{C_{\alpha,\beta}[f,g]'(z)}\bigg \rbrace>\lambda.
		$$
		so that $C_{\alpha,\beta}[f,g] \in \mathcal{K(\lambda)}$.
		
		Similarly, the assumptions $C_{\alpha_i,\beta_i}[f,g] \in \mathcal{G(\gamma)}$, $i=1,2$, in \eqref{4eq2.1} provide
		$$
		{\rm Re}\,\bigg \lbrace 1+\frac{zC_{\alpha,\beta}[f,g]''(z)}{C_{\alpha,\beta}[f,g]'(z)}\bigg \rbrace<1+\frac{\gamma}{2},
		$$
		which implies that $C_{\alpha,\beta}[f,g] \in \mathcal{G(\gamma)}$.
		
		Finally, if $C_{\alpha_i,\beta_i}[f,g] \in \mathcal{C}$, $i=1,2$, then from \eqref{4eq2.1} we get that
		$$
		\int_{\theta_1}^{\theta_2} {\rm Re}\,\bigg \lbrace 1+\frac{zC_{\alpha,\beta}[f,g]''(z)}{C_{\alpha,\beta}[f,g]'(z)}\bigg \rbrace d\theta > -\pi, \,\,\ z=re^{i\theta}\text{ and } 0\leq\theta_1<\theta_2\leq 2\pi.
		$$
		The Kaplan's theorem gives that $C_{\alpha,\beta}[f,g] \in \mathcal{C}$. Hence concludes the proof. 
	\end{proof}
	
	The following useful lemma is due to Kim and Merkes which is proved in \cite{Kim74}.
	
	\begin{lemma}\label{4lemma2.6}
		Let $\alpha \in \mathbb{R}$. The function $b_\alpha(z)=\int_{0}^{z}(1+t)^\alpha dt \in \mathcal{C}\,( \text{ or }\mathcal{K})$  if and only if $-3\leq\alpha\leq 1$ (if and only if $-2\leq\alpha\leq0$).
	\end{lemma}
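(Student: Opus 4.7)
The plan is to reduce both parts to analysis of the single quantity $1+zb_\alpha''(z)/b_\alpha'(z)$, which governs membership in $\mathcal{K}$ directly (its real part must stay positive) and membership in $\mathcal{C}$ via Kaplan's criterion~\eqref{4eq1.1}. A quick differentiation gives $b_\alpha'(z)=(1+z)^\alpha$, hence $b_\alpha''(z)/b_\alpha'(z)=\alpha/(1+z)$, so
\[
1+\frac{zb_\alpha''(z)}{b_\alpha'(z)}=1+\frac{\alpha z}{1+z}.
\]

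For the convexity characterization, I would apply the Cayley transform $\zeta=(1-z)/(1+z)$, which is a conformal bijection from $\mathbb{D}$ onto the right half-plane $\{\mathrm{Re}\,\zeta>0\}$ and turns the expression above into the affine function $\tfrac{1}{2}\bigl((2+\alpha)-\alpha\zeta\bigr)$. Its real part is positive on the whole right half-plane if and only if $\alpha\leq 0$ (otherwise $-\alpha\,\mathrm{Re}\,\zeta$ can be made arbitrarily negative) and $2+\alpha\geq 0$ (obtained by letting $\mathrm{Re}\,\zeta\to 0^+$), giving the claimed range $-2\leq\alpha\leq 0$.

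For the close-to-convex characterization I would use the identity $\mathrm{Re}(zh'(z)/h(z))=\partial_\theta\arg h(re^{i\theta})$ with $h=b_\alpha'$ to convert the Kaplan integral in \eqref{4eq1.1} into a boundary expression
\[
(\theta_2-\theta_1)+\alpha\bigl[\arg(1+re^{i\theta_2})-\arg(1+re^{i\theta_1})\bigr].
\]
Then I would study the function $\phi_r(\theta)=\arg(1+re^{i\theta})$: as $r\to 1^-$, $\phi_r(\theta)\to\theta/2$ on $(0,\pi)$ and $\phi_r(\theta)\to\theta/2-\pi$ on $(\pi,2\pi)$, a sawtooth with one downward jump of $-\pi$ at $\theta=\pi$. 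Hence the extremal value of $\alpha\bigl[\phi_r(\theta_2)-\phi_r(\theta_1)\bigr]$ over an arc of length $L\in(0,2\pi]$ is, in the limit, $\alpha(L/2-\pi)$ when $\alpha>0$ (the arc straddles the jump) and $\alpha L/2$ when $\alpha<0$ (the arc avoids the jump).

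Plugging these extrema into the Kaplan inequality yields, for $\alpha>0$, the condition $L(1+\alpha/2)-\alpha\pi\geq-\pi$, which becomes tight as $L\to 0^+$ and forces $\alpha\leq 1$; for $\alpha<0$, the condition $L(1+\alpha/2)\geq-\pi$, which is tight at $L=2\pi$ and forces $\alpha\geq-3$. The converse direction (necessity) follows by exhibiting explicit boundary configurations that realize these extrema (for instance, $r\uparrow 1$ with $\theta_1$ and $\theta_2$ placed just across $\theta=\pi$). The step I expect to be the main obstacle is the careful $r\to 1^-$ limiting analysis: one must confirm that the extrema of $\phi_r$ are genuinely attained in the limit, handle the delicate behaviour near the singularity $z=-1$, and verify that the endpoint values $\alpha\in\{-3,-2,0,1\}$ still give functions in the respective classes so that the inclusions are genuinely sharp.
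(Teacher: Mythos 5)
The paper gives no proof of this lemma at all --- it is quoted from Kim and Merkes \cite{Kim74} --- so your argument can only be judged on its own merits, and on those it is sound; it is essentially the classical argument. The convexity half is clean and complete: $1+zb_\alpha''(z)/b_\alpha'(z)=1+\alpha z/(1+z)$, and under $\zeta=(1-z)/(1+z)$ this is indeed $\tfrac12\bigl((2+\alpha)-\alpha\zeta\bigr)$ on the right half-plane, forcing $-2\le\alpha\le0$. The close-to-convex half is correct in outline and identifies the right extremal configurations; the reduction of Kaplan's integral to $(\theta_2-\theta_1)+\alpha\bigl[\arg(1+re^{i\theta_2})-\arg(1+re^{i\theta_1})\bigr]$ is exactly right, as are the two resulting constraints $\alpha\le1$ (short arc straddling $\theta=\pi$) and $\alpha\ge-3$ (long arc avoiding it). Two points deserve tightening. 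First, your phrase ``tight at $L=2\pi$'' is a slight abuse: the full circle necessarily contains the jump, so the argument difference there is $0$, not $\pi$; the correct statement is that the supremum $\pi$ of the difference is approached as $L\to2\pi^-$ and $r\to1^-$, which still yields $2\pi(1+\alpha/2)\ge-\pi$. Second, for sufficiency you should not only take limits but give the uniform bound valid for every $r<1$: since ${\rm Re}\,\bigl(z/(1+z)\bigr)<1/2$ and $\bigl|\arg(1+re^{i\theta})\bigr|\le\arcsin r<\pi/2$, one gets for $0<\alpha\le1$ the strict bound $L+\alpha\cdot(-2\arcsin r)>-\alpha\pi\ge-\pi$, and for $-3\le\alpha<0$ (after reducing $L$ modulo $2\pi$) the bound $L+\alpha L/2\ge 2\pi(1+\alpha/2)\ge-\pi$ with strictness coming from $\arg$-differences being strictly below $L/2$; this also settles the endpoint values $\alpha\in\{-3,1\}$ without a separate check. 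With those two details filled in, the proof is complete.
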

	
	One of the main results we obtain for the operator $C_{\alpha,\beta}$ is the following:
	
	\begin{theorem}\label{4theorem2.7}
		Let $f,g\in \mathcal{K}$. Then $C_{\alpha,\beta}[f,g] \in \mathcal{K}$ if and only if $0\leq\alpha, 2\beta, \alpha+2\beta\leq 2$. 
	\end{theorem}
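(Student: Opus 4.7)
The plan is to compute the pre-Schwarzian of $C=C_{\alpha,\beta}[f,g]$ explicitly. From $C'(z)=(f(z)/z)^\alpha (g'(z))^\beta$ and logarithmic differentiation one obtains
\begin{equation*}
1 + \frac{zC''(z)}{C'(z)} = (1-\alpha-\beta) + \alpha\,\frac{zf'(z)}{f(z)} + \beta\left(1+\frac{zg''(z)}{g'(z)}\right).
\end{equation*}
This isolates the contributions of $f$ and $g$ in a form where the standard inequalities for the class $\mathcal{K}$ are immediately usable: every $f\in\mathcal{K}$ is starlike of order $1/2$, so $\operatorname{Re}(zf'(z)/f(z)) > 1/2$ on $\mathbb{D}$, and $\operatorname{Re}(1+zg''(z)/g'(z)) > 0$ on $\mathbb{D}$ for $g\in\mathcal{K}$.

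For the sufficiency direction I would assume $\alpha\geq 0$, $\beta\geq 0$ and $\alpha+2\beta\leq 2$, take real parts in the identity above, and bound
\begin{equation*}
\operatorname{Re}\!\left(1 + \frac{zC''(z)}{C'(z)}\right) > (1-\alpha-\beta) + \tfrac{\alpha}{2} = \tfrac{1}{2}\bigl(2-\alpha-2\beta\bigr)\geq 0,
\end{equation*}
so $C\in\mathcal{K}$. The factor $2$ in the condition $\alpha+2\beta\leq 2$ is precisely the echo of the sharp starlikeness-of-order-$1/2$ bound for convex functions; it is what breaks symmetry between $\alpha$ and $\beta$.

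For necessity I would exhibit convex test pairs that force each of the three defining inequalities separately. With $f(z)=z/(1-z)\in\mathcal{K}$ and $g(z)=z\in\mathcal{K}$ one has $1+zC''/C'=1+\alpha z/(1-z)$, whose real part tends to $-\infty$ along $z=r\to 1^-$ whenever $\alpha<0$. The symmetric choice $f(z)=z$, $g(z)=z/(1-z)$ yields $1+zC''/C'=1+2\beta z/(1-z)$, failing for $\beta<0$. Finally, the joint choice $f(z)=g(z)=z/(1-z)$ gives $1+zC''/C'=1+(\alpha+2\beta)z/(1-z)$, whose real part at $z=-r$ approaches $1-(\alpha+2\beta)/2$ as $r\to 1^-$, which is negative whenever $\alpha+2\beta>2$. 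Collectively the three examples exclude every $(\alpha,\beta)$ outside the claimed region.

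I do not foresee a serious obstacle. Both directions reduce to the pre-Schwarzian identity followed by either a routine real-part estimate or a short extremal computation on $z/(1-z)$. The one subtlety worth checking carefully is the choice of branches of $(f(z)/z)^\alpha$ and $(g'(z))^\beta$: each is normalized to equal $1$ at the origin, which is legitimate because $f(z)/z$ and $g'(z)$ are non-vanishing on $\mathbb{D}$, and this is what makes $\log C'$ single-valued so that the differentiation step above is valid.
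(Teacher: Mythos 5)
Your proposal is correct and follows essentially the same route as the paper: the identical pre-Schwarzian identity, the same use of ${\rm Re}\,(zf'(z)/f(z))>1/2$ for convex $f$ in the sufficiency direction, and half-plane test functions for necessity. The only (inessential) difference is that you use $z/(1-z)$ and compute the boundary behaviour directly, whereas the paper uses $z/(1+z)$ and invokes the Kim--Merkes lemma on $b_\alpha(z)=\int_0^z(1+t)^\alpha\,dt$.
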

	\begin{proof}
		It is easy to calculate that 
		\begin{equation}\label{4eq2.2}
			{\rm Re}\,\bigg \lbrace 1+\frac{zC_{\alpha,\beta}[f,g]''(z)}{C_{\alpha,\beta}[f,g]'(z)}\bigg \rbrace =\alpha {\rm Re}\,\bigg\lbrace\frac{zf'(z)}{f(z)}\bigg\rbrace + \beta \,{\rm Re}\,\bigg\lbrace\frac{zg''(z)}{g'(z)}+1 \bigg\rbrace+(1-\alpha-\beta).
		\end{equation}
		It is known that 
		$$
		{\rm Re}\,\bigg\lbrace\frac{zf'(z)}{f(z)}\bigg\rbrace>\frac{1}{2}
		$$
		for $f\in \mathcal{K}$.
		Then, for $\alpha\geq 0$ and $\beta\geq 0$
		$$
		{\rm Re}\,\bigg \lbrace 1+\frac{zC_{\alpha,\beta}[f,g]''(z)}{C_{\alpha,\beta}[f,g]'(z)}\bigg \rbrace>0
		$$
		if $\alpha+2\beta\leq 2$.
		
		For the only if part, we take $f(z)=z$ and $g(z)=z/(1+z)$. Then by Lemma \ref{4lemma2.6} we have $C_{\alpha,\beta}[f,g] \in \mathcal{K}$ if and only if $0\leq\beta\leq 1$. For  $g(z)=z$ and $f(z)=z/(1+z)$,  $C_{\alpha,\beta}[f,g] \in \mathcal{K}$ if and only if $0\leq\alpha\leq 2$, which can easily be verified by using Lemma \ref{4lemma2.6}. Finally, if we choose $f(z)=z/(1+z)$ and $g(z)=z/(1+z)$ then $C_{\alpha,\beta}[f,g] \in \mathcal{K}$ if and only if $0\leq\alpha+2\beta\leq 2$, which also follows by using Lemma \ref{4lemma2.6}. This completes the proof.
	\end{proof}
	
	The relation \eqref{4eq1.2} obtains the following result:
	
	\begin{corollary}
		Let $f\in \mathcal{K}$. Then $J_{\alpha}[f] \in \mathcal{K}$ if and only if $0\leq \alpha\leq 2$. 
	\end{corollary}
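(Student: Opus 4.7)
The plan is to obtain the corollary as an immediate specialization of Theorem~\ref{4theorem2.7}, using the chain of identifications recorded in \eqref{4eq1.2}. The key observation is that the identity map $g(z)=z$ lies in $\mathcal{K}$, and that with the choice $\beta=0$ (or equivalently $g=\mathrm{id}$, which forces $g'\equiv 1$) the operator $C_{\alpha,\beta}$ collapses onto the single-variable Kim--Merkes operator: $C_{\alpha,0}[f,g]\equiv C_{\alpha,\beta}[f,z]\equiv J_\alpha[f]$.

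For sufficiency, I would assume $0\le\alpha\le 2$ and set $\beta=0$, $g(z)=z\in\mathcal{K}$. Then the three conditions of Theorem~\ref{4theorem2.7} become $0\le\alpha$, $0\le 2\beta=0$, and $\alpha+2\beta=\alpha\le 2$, all of which hold. Hence $C_{\alpha,0}[f,z]\in\mathcal{K}$, which by \eqref{4eq1.2} is exactly $J_\alpha[f]\in\mathcal{K}$.

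For necessity I would argue by contrapositive. Suppose $\alpha<0$ or $\alpha>2$. With $\beta=0$ this violates either $0\le\alpha$ or $\alpha+2\beta\le 2$, so the ``only if'' half of Theorem~\ref{4theorem2.7} furnishes convex functions $f,g\in\mathcal{K}$ with $C_{\alpha,0}[f,g]\notin\mathcal{K}$. Because $(g'(w))^0=1$, the operator $C_{\alpha,0}[f,g]$ depends on $f$ alone; in particular the same $f$ satisfies $J_\alpha[f]=C_{\alpha,0}[f,z]\notin\mathcal{K}$, proving that $0\le\alpha\le 2$ is necessary.

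There is essentially no obstacle here once Theorem~\ref{4theorem2.7} is in hand: the only thing to verify is the bookkeeping at the boundary values $\alpha=0,2$, which is covered by the weak inequalities in Theorem~\ref{4theorem2.7}, and the harmless observation that the sharpness part of that theorem yields an $f$ witnessing failure independently of $g$, since $C_{\alpha,0}$ is genuinely independent of $g$.
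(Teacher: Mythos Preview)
Your proposal is correct and follows exactly the approach the paper intends: the corollary is stated immediately after Theorem~\ref{4theorem2.7} with the one-line justification ``The relation \eqref{4eq1.2} obtains the following result,'' and you have simply unpacked that sentence by specializing to $\beta=0$, $g(z)=z$. Your observation that $C_{\alpha,0}[f,g]$ is independent of $g$ is precisely what makes the necessity direction go through, and indeed the paper's own sharpness example in the proof of Theorem~\ref{4theorem2.7} for the constraint $0\le\alpha\le2$ already takes $g(z)=z$, $f(z)=z/(1+z)$.
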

	
	The relation \eqref{4eq1.3} leads to the result of Kim and Srivastava \cite[Theorem 2]{KS06} also stated as follows:
	
	\begin{corollary}
		Let $g\in \mathcal{K}$. Then $I_{\beta}[g] \in \mathcal{K}$ if and only if $0\leq\beta\leq 1$. 
	\end{corollary}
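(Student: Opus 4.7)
The plan is to derive this corollary as a direct specialisation of Theorem~\ref{4theorem2.7} via the identification supplied by relation \eqref{4eq1.3}. The first move is to recognize that setting $\alpha=0$ in the definition of $C_{\alpha,\beta}[f,g]$ collapses the factor $(f(w)/w)^{\alpha}$ to $1$, so that
$$
C_{0,\beta}[f,g](z)=\int_{0}^{z}(g'(w))^{\beta}\,dw=I_{\beta}[g](z)
$$
independently of $f$. Choosing in particular $f(z)=z$, which is evidently a member of $\mathcal{K}$ since it maps $\mathbb{D}$ onto itself, we obtain $I_{\beta}[g]\equiv C_{0,\beta}[z,g]$ with both entries drawn from $\mathcal{K}$, so Theorem~\ref{4theorem2.7} is directly applicable.

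For the sufficiency direction, I would suppose $0\leq\beta\leq 1$ and observe that the hypothesis $0\leq\alpha,\,2\beta,\,\alpha+2\beta\leq 2$ of Theorem~\ref{4theorem2.7} reduces at $\alpha=0$ to $0\leq 2\beta\leq 2$, which holds. Theorem~\ref{4theorem2.7} then yields $I_{\beta}[g]=C_{0,\beta}[z,g]\in\mathcal{K}$ for every $g\in\mathcal{K}$. For the necessity direction, I would argue contrapositively: if $\beta<0$ or $\beta>1$, then $2\beta<0$ or $2\beta>2$, so the $(\alpha,\beta)=(0,\beta)$ pair violates the numerical conditions of Theorem~\ref{4theorem2.7}. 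Hence by the ``only if'' half of that theorem there exist $f,g\in\mathcal{K}$ with $C_{0,\beta}[f,g]\notin\mathcal{K}$; since $C_{0,\beta}[f,g]=I_{\beta}[g]$, such a $g$ gives $I_{\beta}[g]\notin\mathcal{K}$, which refutes the biconditional's right-hand side.

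There is essentially no obstacle here; the only point requiring care is the bookkeeping in relation \eqref{4eq1.3}, namely verifying that the branch convention $(g'(0))^{\beta}=1$ is consistent between the definitions of $I_{\beta}$ and $C_{0,\beta}$, which it is by construction. All the analytic work is already contained in Theorem~\ref{4theorem2.7}, and the corollary is merely the ``$\alpha=0$ slice'' of the trapezoidal region $\{0\leq\alpha,\,2\beta,\,\alpha+2\beta\leq 2\}$, which is the interval $0\leq\beta\leq 1$ on the $\beta$-axis.
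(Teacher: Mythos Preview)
Your proposal is correct and follows essentially the same route as the paper: the corollary is obtained from Theorem~\ref{4theorem2.7} via the identification in relation~\eqref{4eq1.3}, i.e., taking $\alpha=0$ (equivalently $f(z)=z$) so that $C_{0,\beta}[f,g]=I_{\beta}[g]$ and reading off the $\alpha=0$ slice of the parameter region. The paper states only that \eqref{4eq1.3} ``leads to'' the corollary, so your write-up simply supplies the details that are implicit there.
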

	
	The following lemma is discussed in \cite{Kim74}.
	
	\begin{lemma}\label{4lemma2.8}
		If $f\in \mathcal{K}$, then, for $0\leq r<1$, $0\leq \theta_1<\theta_2\leq 2\pi$, we have
		$$
		\frac{\theta_2 - \theta_1}{2} < \int_{\theta_1}^{\theta_2} {\rm Re}\,\bigg(\frac{zf'(z)}{f(z)}\bigg) d\theta \leq \pi + \frac{\theta_2 - \theta_1}{2},
		$$
		and
		$$
		0< \int_{\theta_1}^{\theta_2}  {\rm Re}\,\bigg(\frac{zf''(z)}{f'(z)}+1\bigg)d\theta \leq 2\pi,
		$$
		where $z=r e^{i\theta}$.
	\end{lemma}
	
	In the next theorem we obtain the region in $\alpha \beta$-plane in which $C_{\alpha,\beta}[f,g]\in \mathcal{C}$ whenever $f,g\in \mathcal{K}$.
	
	\begin{theorem}\label{4theorem2.9}
		Let $f,g\in \mathcal{K}$. Then $C_{\alpha,\beta}[f,g] \in \mathcal{C}$ if and only if  $-1\leq \alpha, 2\beta, \alpha+2\beta\leq 3$.
	\end{theorem}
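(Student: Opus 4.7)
The plan is to verify Kaplan's criterion \eqref{4eq1.1} for $F := C_{\alpha,\beta}[f,g]$. Logarithmic differentiation of $F'(z) = (f(z)/z)^{\alpha}(g'(z))^{\beta}$ gives
\begin{equation*}
1 + \frac{zF''(z)}{F'(z)} = (1-\alpha-\beta) + \alpha\,\frac{zf'(z)}{f(z)} + \beta\Big(1 + \frac{zg''(z)}{g'(z)}\Big).
\end{equation*}
Writing $z = re^{i\theta}$, $L := \theta_2 - \theta_1$, $A := \int_{\theta_1}^{\theta_2}\mathrm{Re}(zf'/f)\,d\theta$ and $B := \int_{\theta_1}^{\theta_2}\mathrm{Re}(1+zg''/g')\,d\theta$, it suffices to show
\begin{equation*}
\mathcal{I} := (1-\alpha-\beta)L + \alpha A + \beta B > -\pi
\end{equation*}
for every $r \in (0,1)$ and every $\theta_1 < \theta_2$.

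The key input is a pair of sharp integral bounds. Every $f \in \mathcal{K}$ is starlike of order $1/2$, so $\mathrm{Re}(zf'/f) > 1/2$ pointwise; combined with $\int_{0}^{2\pi}\mathrm{Re}(zf'/f)\,d\theta = 2\pi$, subtracting the complementary arc's contribution yields $L/2 < A < L/2 + \pi$. Similarly, $\mathrm{Re}(1+zg''/g') > 0$ for $g \in \mathcal{K}$, and its full-circle integral equals $2\pi$, giving $0 < B < 2\pi$. All four extremes are approached (by letting $r \to 1^{-}$ with the arc placed appropriately) by the half-plane mapping $k(z) = z/(1-z)$.

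For the sufficiency, a case analysis on the signs of $\alpha,\beta$ is carried out, substituting in each case the extreme values of $A$ and $B$ that make $\mathcal{I}$ smallest. When $\alpha,\beta \geq 0$ one has $\mathcal{I} > L(1-\alpha/2-\beta)$, which is non-negative if $\alpha+2\beta \leq 2$ and, in the opposite sub-case, is minimized as $L \to 2\pi^{-}$, giving $\mathcal{I} > \pi(2-\alpha-2\beta) \geq -\pi$ precisely when $\alpha+2\beta \leq 3$. The three remaining sign patterns, handled by the same procedure, contribute the bounds $\alpha \leq 3,\, 2\beta \geq -1$ (when $\alpha \geq 0,\beta < 0$); $\alpha \geq -1,\, 2\beta \leq 3$ (when $\alpha < 0,\beta \geq 0$); and $\alpha+2\beta \geq -1$ (when $\alpha,\beta < 0$). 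Their conjunction is precisely the hexagon $-1 \leq \alpha, 2\beta, \alpha+2\beta \leq 3$.

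For the necessity, three extremal choices pin down the three pairs of bounding inequalities. Setting $f(z) = z$ collapses the operator to $I_\beta[g]$ by \eqref{4eq1.3}; Theorem A(ii) (taking its first exponent to be $0$) then forces $-1 \leq 2\beta \leq 3$. Setting $g(z) = z$ collapses the operator to $J_\alpha[f]$ by \eqref{4eq1.2}; Theorem B(ii) (with its second exponent $= 0$) forces $-1 \leq \alpha \leq 3$. Finally, taking $f(z) = g(z) = z/(1-z)$ reduces $C_{\alpha,\beta}[f,g]$ to $\int_{0}^{z}(1-w)^{-(\alpha+2\beta)}\,dw$, which is classically close-to-convex if and only if $-1 \leq \alpha+2\beta \leq 3$. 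The principal obstacle is the sufficiency case analysis: in each sign pattern one must determine whether the worst value of $\mathcal{I}$ arises as $L \to 0^{+}$ or as $L \to 2\pi^{-}$ and then verify that the resulting scalar inequality matches one of the six bounding constraints of the hexagon; the pointwise bounds on $A$ and $B$ themselves are standard, so the bookkeeping across the four cases forms the bulk of the work.
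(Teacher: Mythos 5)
Your proposal is correct and follows essentially the same route as the paper: the same logarithmic-derivative identity \eqref{4eq2.2}, the same Kim--Merkes integral bounds (Lemma \ref{4lemma2.8}, which you re-derive from starlikeness of order $1/2$), and the same four-case sign analysis for sufficiency. For necessity the paper tests the pairs $(z,\,z/(1+z))$, $(z/(1+z),\,z)$ and $(z/(1+z),\,z/(1+z))$ directly against Lemma \ref{4lemma2.6}, whereas you cite Theorems A(ii) and B(ii) for the first two constraints; since those results rest on the same extremal half-plane mappings, this is only a cosmetic difference.
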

	\begin{proof}
		To obtain the region in the $\alpha\beta$-plane for which $C_{\alpha,\beta}[f,g] \in \mathcal{C}$, whenever $f,g\in \mathcal{K}$, we need to consider four cases on $\alpha$ and $\beta$.
		
		{\bf Case (i) $\alpha\geq 0$ and $\beta \geq 0$:}
		
		Given that $f,g\in \mathcal{K}$. Then by Lemma \ref{4lemma2.8} together with \eqref{4eq2.2} we obtain that 
		$$
		\int_{\theta_1}^{\theta_2} {\rm Re}\,\bigg \lbrace 1+\frac{zC_{\alpha,\beta}[f,g]''(z)}{C_{\alpha,\beta}[f,g]'(z)}\bigg \rbrace d\theta > (1-\frac{\alpha}{2}-\beta)(\theta_2-\theta_1).
		$$ 
		Then by the Kaplan's theorem $C_{\alpha,\beta}[f,g]\in \mathcal{C}$ if $\alpha+2\beta\leq 3$.
		
		{\bf Case (ii) $\alpha\geq 0$ and $\beta < 0$:}
		
		For $f,g\in \mathcal{K}$, Lemma \ref{4lemma2.8} gives that 
		$$
		\int_{\theta_1}^{\theta_2} {\rm Re}\,\bigg \lbrace 1+\frac{zC_{\alpha,\beta}[f,g]''(z)}{C_{\alpha,\beta}[f,g]'(z)}\bigg \rbrace d\theta > (1-\frac{\alpha}{2}-\beta)(\theta_2-\theta_1)+2\pi\beta.
		$$ 
		Then Kaplan's theorem concludes that $C_{\alpha,\beta}[f,g]\in \mathcal{C}$ if $\alpha\leq 3$ and $\beta\geq -1/2$.
		
		{\bf Case (iii) $\alpha< 0$ and $\beta \geq 0$:}
		
		By the assumption on $f,g\in \mathcal{K}$ we estimate   
		$$
		\int_{\theta_1}^{\theta_2} {\rm Re}\,\bigg \lbrace 1+\frac{zC_{\alpha,\beta}[f,g]''(z)}{C_{\alpha,\beta}[f,g]'(z)}\bigg \rbrace d\theta > \alpha\pi+(1-\frac{\alpha}{2}-\beta)(\theta_2-\theta_1)
		$$
		with the help of Lemma \ref{4lemma2.8}. 
		Then $C_{\alpha,\beta}[f,g]\in \mathcal{C}$  if $\alpha\geq -1$ and $\beta\leq 3/2$ by using the Kaplan's theorem.
		
		{\bf Case (iv) $\alpha< 0$ and $\beta< 0$:}
		
		We derive the inequality
		$$
		\int_{\theta_1}^{\theta_2} {\rm Re}\,\bigg \lbrace 1+\frac{zC_{\alpha,\beta}[f,g]''(z)}{C_{\alpha,\beta}[f,g]'(z)}\bigg \rbrace d\theta > (\alpha+2\beta)\pi
		$$
		by using Lemma \ref{4lemma2.8}. 
		Now, the Kaplan's theorem gives that $C_{\alpha,\beta}[f,g]\in \mathcal{C}$ if $\alpha+2\beta\geq -1$.
		
		To prove the sharpness of the result, on the one side we consider the functions $f(z)=z$ and $g(z)=z/(1+z)$. Now by Lemma \ref{4lemma2.6} we have $C_{\alpha,\beta}[f,g] \in \mathcal{C}$ if and only if $-1/2 \leq \beta \leq 3/2$. On the other side, we choose $g(z)=z$ and $f(z)=z/(1+z)$ to verify that  $C_{\alpha,\beta}[f,g] \in \mathcal{C}$ if and only if $-1\leq\alpha\leq3$ which is due to Lemma \ref{4lemma2.6}. Finally, if we choose $f(z)=z/(1+z)$ and $g(z)=z/(1+z)$ then $C_{\alpha,\beta}[f,g] \in \mathcal{C}$ if and only if $-1\leq \alpha+2\beta\leq 3$, which follows from Lemma \ref{4lemma2.6}. This concludes the proof.
	\end{proof}
	
	Due to the relation \eqref{4eq1.2}, we have the following consequence of either Theorem \ref{4theorem2.9} or Theorem B:
	
	\begin{corollary}\cite[Theorem~2]{Merkes71}
		Let $f\in \mathcal{K}$. Then $J_{\alpha}[f] \in \mathcal{C}$ if and only if $-1\leq \alpha \leq 3$.
	\end{corollary}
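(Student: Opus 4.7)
The plan is to derive the corollary as a one-line specialization of Theorem~\ref{4theorem2.9} (or, alternatively, of Theorem~B). The key observation, already recorded in the relation \eqref{4eq1.2}, is that $J_\alpha[f]$ is exactly the $\beta=0$ slice of the two-parameter family $C_{\alpha,\beta}[f,g]$: since $(g'(w))^0\equiv 1$, we have
$$
J_\alpha[f](z)=\int_0^z\Big(\frac{f(w)}{w}\Big)^\alpha dw=C_{\alpha,0}[f,g](z)
$$
for any $g\in\mathfrak{F}$. In particular, taking the identity $g(z)=z\in\mathcal{K}$ places the problem squarely inside the hypothesis of Theorem~\ref{4theorem2.9}.

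With $\beta=0$ the characterizing condition $-1\leq\alpha,\,2\beta,\,\alpha+2\beta\leq 3$ of Theorem~\ref{4theorem2.9} collapses to $-1\leq\alpha\leq 3$, because the two inequalities involving $2\beta$ become the trivially satisfied $-1\leq 0\leq 3$ and the one involving $\alpha+2\beta$ merely repeats the bound on $\alpha$. This yields the ``if'' direction of the corollary at once.

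For the ``only if'' direction I would invoke the sharpness built into Theorem~\ref{4theorem2.9}: whenever the hexagonal condition fails, that theorem supplies convex functions $f,g$ for which $C_{\alpha,\beta}[f,g]\notin\mathcal{C}$. Along the axis $\beta=0$ the hexagon is exited precisely at $\alpha=-1$ and $\alpha=3$, so for any $\alpha\notin[-1,3]$ there is an $f\in\mathcal{K}$ with $C_{\alpha,0}[f,g]=J_\alpha[f]\notin\mathcal{C}$, which is exactly the necessity required. The alternative route through Theorem~B is equally short: by \eqref{4eq1.2}, $J_\alpha[f]=J_{\alpha,0}[f,g]$ for any $g\in\mathcal{K}$, and specializing the criterion of Theorem~B(ii) to $\beta=0$ reduces it to the same condition $-1\leq\alpha\leq 3$.

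There is essentially no obstacle: the corollary reduces to the substitution $\beta=0$ in Theorem~\ref{4theorem2.9} via \eqref{4eq1.2}. The one small point worth verifying is that the extremal pair produced by the sharpness half of Theorem~\ref{4theorem2.9} does not degenerate along $\beta=0$; this is automatic, because at $\beta=0$ the operator is independent of $g$, so any extremal $f$ in the proof of that theorem certifies the failure of $J_\alpha[f]\in\mathcal{C}$ on its own.
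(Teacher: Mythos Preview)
Your proposal is correct and matches the paper's approach exactly: the paper derives this corollary as an immediate consequence of Theorem~\ref{4theorem2.9} (or equivalently Theorem~B) via the relation \eqref{4eq1.2}, i.e., by setting $\beta=0$. Your observation that the sharpness argument in Theorem~\ref{4theorem2.9} for the $\alpha$-bounds uses $g(z)=z$ (so that $C_{\alpha,\beta}[f,g]$ reduces to $J_\alpha[f]$ regardless of $\beta$) is precisely what the paper relies on for the ``only if'' direction.
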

	
	The relation \eqref{4eq1.3} produces the following corollary as a consequence of either Theorem \ref{4theorem2.9} or Theorem A: 
	
	\begin{corollary}\label{4cor2.7}
		Let $g\in \mathcal{K}$. Then $I_{\beta}[g] \in \mathcal{C}$ if and only if $-1/2\leq \beta \leq 3/2$. 
	\end{corollary}
	It is appropriate to remark here that Corollary~\ref{4cor2.7} can also be deduced from \cite[Theorem 1]{Merkes71} with the help of the classical Alexander theorem.
	
	The next lemma provides that every function in the class $\mathcal{G}(\gamma)$, $0<\gamma$, can be recovered from a function in the class $\mathcal{K}$ by the Hornich multiplication operation, which is already studied in \cite{Ali18} for the limited range $0<\gamma\leq 1$.

The following lemma is observed by Koepf \cite{Koe85}.
\begin{lemma}\label{4lemma2.11}
For all $\lambda<1$, we have $\mathcal{K}(\lambda)=(1-\lambda)\star\mathcal{K}$.
\end{lemma}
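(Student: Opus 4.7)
The plan is to imitate exactly the argument used for Lemma~\ref{4lemma2.10}. I would define the map $\xi_\lambda:\mathfrak{F}\longrightarrow\mathfrak{F}$ by $\xi_\lambda(f)=(1-\lambda)\star f$, i.e.\ $(\xi_\lambda(f))(z)=\int_0^z (f'(w))^{1-\lambda}dw$, with the branch chosen so that $(\xi_\lambda(f))'(0)=1$. Since $\lambda<1$ forces $1-\lambda>0$, the scalar $1-\lambda$ is an invertible real number, so under the (Hornich) multiplicative action of $\mathbb{R}\setminus\{0\}$ on $\mathfrak{F}$ the map $\xi_\lambda$ is a bijection with inverse $\tfrac{1}{1-\lambda}\star(\cdot)$. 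Consequently, proving the identity $\mathcal{K}(\lambda)=(1-\lambda)\star\mathcal{K}$ reduces to the biconditional $f\in\mathcal{K}\iff\xi_\lambda(f)\in\mathcal{K}(\lambda)$.

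The key computation is a one-line logarithmic derivative. Writing $g=\xi_\lambda(f)$, so that $g'(z)=(f'(z))^{1-\lambda}$, differentiation yields $g''(z)/g'(z)=(1-\lambda)\,f''(z)/f'(z)$, hence
\[
1+\frac{zg''(z)}{g'(z)}=(1-\lambda)\left(1+\frac{zf''(z)}{f'(z)}\right)+\lambda.
\]
Taking real parts shows that the condition $\mathrm{Re}\,\bigl(1+zg''(z)/g'(z)\bigr)>\lambda$ is equivalent to $\mathrm{Re}\,\bigl(1+zf''(z)/f'(z)\bigr)>0$, which is exactly the statement $g\in\mathcal{K}(\lambda)\iff f\in\mathcal{K}$. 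Combined with the bijectivity of $\xi_\lambda$ on $\mathfrak{F}$, this yields both inclusions $\mathcal{K}(\lambda)\subseteq(1-\lambda)\star\mathcal{K}$ and $(1-\lambda)\star\mathcal{K}\subseteq\mathcal{K}(\lambda)$.

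There is essentially no substantive obstacle. The only point that requires care is the choice of branch for $(f'(z))^{1-\lambda}$ and for the inverse $(g'(z))^{1/(1-\lambda)}$; but since $1-\lambda>0$ and $f'(0)=g'(0)=1$, the normalization $(f'(0))^{1-\lambda}=1$ fixes a single analytic branch on $\mathbb{D}$ (as $f'$ is non-vanishing there), and the same applies to the inverse exponent $1/(1-\lambda)$. Thus the proof is just the definition of $\xi_\lambda$, the displayed computation above, and an invocation of invertibility.
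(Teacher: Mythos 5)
Your proof is correct. The paper itself gives no argument for this lemma (it simply cites Koepf), but your computation $1+zg''(z)/g'(z)=(1-\lambda)\bigl(1+zf''(z)/f'(z)\bigr)+\lambda$ together with the invertibility of $(1-\lambda)\star(\cdot)$ for $1-\lambda>0$ is exactly the analogue of the paper's own proof of Lemma~\ref{4lemma2.10}, and it establishes both inclusions as claimed.
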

	
The following two theorems are natural generalizations of Theorem \ref{4theorem2.7} and  Theorem \ref{4theorem2.9}.	\begin{theorem}\label{4theorem1.8}
For $\lambda<1$, let $f\in \mathcal{K}$ and $g\in \mathcal{K}(\lambda)$ then we have
\begin{enumerate}
\item[\bf (i)] $C_{\alpha,\beta}[f,g]\in  \mathcal{K}(\lambda)$ if and only if $0\leq\alpha, 2\beta(1-\lambda), \alpha+2\beta(1-\lambda)\leq 2(1-\lambda)$.
\item[\bf (ii)] $C_{\alpha,\beta}[f,g]\in  \mathcal{G}(\gamma)$, $\gamma>0$, if and only if $-\gamma\leq\alpha, 2\beta(1-\lambda), \alpha+2\beta(1-\lambda)\leq 0$.
\item[\bf (iii)] $C_{\alpha,\beta}[f,g]\in  \mathcal{C}$ if and only if $-1\leq \alpha, 2\beta(1-\lambda), \alpha+2\beta(1-\lambda)\leq 3$.
\end{enumerate}
\end{theorem}
\begin{proof}
{\bf (i)} Given that $g\in \mathcal{K}(\lambda)$. Then by Lemma \ref{4lemma2.11} there exists a function $h\in\mathcal{K}$ such that $g(z)=((1-\lambda)\star h)(z)=I_{1-\lambda}[h](z)$, which implies that $C_{\alpha,\beta}[f,g](z)=C_{\alpha,\beta(1-\lambda)}[f,h](z)$. Then, we have 
$$
C_{\alpha,\beta}[f,g]=C_{\alpha,\beta(1-\lambda)}[f,h]\in \mathcal{K}(\lambda)=(1-\lambda)\star\mathcal{K}.
$$ 
It is easy to obtain that $(1/(1-\lambda))\star C_{\alpha,\beta(1-\lambda)}[f,h]=C_{\alpha/(1-\lambda),\beta}[f,h]\in \mathcal{K}$. Remaining work can be completed by using Theorem \ref{4theorem2.7}.
		
{\bf (ii)} From part (i) we obtain $C_{\alpha,\beta}[f,g](z)=C_{\alpha,\beta(1-\lambda)}[f,h](z)$. By using Lemma \ref{4lemma2.10}, we observe that $C_{\alpha,\beta(1-\lambda)}[f,h]\in \mathcal{G}(\gamma)=(-\gamma/2)\star \mathcal{K}$. A simple computation provides us $(-2/\gamma)\star C_{\alpha,\beta(1-\lambda)}[f,h]=C_{-2\alpha/\gamma,-2\beta(1-\lambda)/\gamma}[f,h]\in \mathcal{K}$. Now, one can find the desired restrictions on $\alpha$ and $\beta$ by using Theorem \ref{4theorem2.7}. 
		
{\bf (iii)} As we know from part (i) that $C_{\alpha,\beta}[f,g](z)=C_{\alpha,\beta(1-\lambda)}[f,h](z)$, the rest of the steps of the proof follow from Theorem \ref{4theorem2.9}. This completes the proof.
\end{proof}
\begin{theorem}\label{4theorem1.9}
For $\gamma>0$, let $f\in \mathcal{K}$ and $g\in \mathcal{G}(\gamma)$ then we have
\begin{enumerate}
\item[\bf (i)] $C_{\alpha,\beta}[f,g]\in  \mathcal{K}(\lambda)$ if and only if $0\leq\alpha, -\beta\gamma, \alpha-\beta\gamma\leq 2(1-\lambda)$.
\item[\bf (ii)] $C_{\alpha,\beta}[f,g]\in  \mathcal{G}(\gamma)$ if and only if $-\gamma\leq\alpha, -\beta\gamma, \alpha-\beta\gamma\leq 0$.
\item[\bf (iii)] $C_{\alpha,\beta}[f,g]\in  \mathcal{C}$ if and only if $-1\leq \alpha, -\beta\gamma, \alpha-\beta\gamma\leq 3$.
\end{enumerate}
\end{theorem}
\begin{proof}
The proof of part (i), (ii) and (iii) follows from the proof of corresponding part of Theorem \ref{4theorem1.8} by taking $g\in  \mathcal{G}(\gamma)$ instead of $\mathcal{K}(\lambda)$ and using Lemma \ref{4lemma2.10}.
\end{proof}

The proof of Theorem \ref{4theorem2.10} is based on the result \cite[Example 1, Equation (16)]{ponnusamy95} of Ponnusamy and Rajasekaran, in which they observed that  
\begin{equation}\label{4eq2.5}
0<{\rm Re}\,\bigg(\frac{zf'(z)}{f(z)}\bigg)<\frac{4}{3}
\end{equation}
for $f\in \mathcal{G}$. We are using this result to prove the following theorem.
	
In Theorem \ref{4theorem2.7} if we choose $f\in \mathcal{G}$ with the remaining conditions unchanged, then we obtain the following result:
	
\begin{theorem}\label{4theorem2.10}
Let $f\in\mathcal{G}$ and $g\in \mathcal{K}$. Then $C_{\alpha,\beta}[f,g] \in \mathcal{K}$ if and only if  $0\leq\beta\leq 1$, $ \alpha+\beta\leq 1$ and $3\beta-\alpha\leq 3$. 
\end{theorem}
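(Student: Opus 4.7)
The plan is to reduce convexity of $F:=C_{\alpha,\beta}[f,g]$ to a single pointwise inequality. Logarithmic differentiation of $F'(z)=\bigl(f(z)/z\bigr)^\alpha (g'(z))^\beta$ yields
\begin{equation*}
1+\frac{zF''(z)}{F'(z)} \;=\; 1 + \alpha P(z) + \beta Q(z), \qquad P(z):=\frac{zf'(z)}{f(z)}-1,\quad Q(z):=\frac{zg''(z)}{g'(z)},
\end{equation*}
so $F\in\mathcal{K}$ precisely when $\mathrm{Re}\bigl(1+\alpha P+\beta Q\bigr)>0$ throughout $\mathbb{D}$.

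The next step is to recall sharp image bounds for $P$ and $Q$. For convex $g$, the classical fact $Q\prec 2z/(1-z)$ shows $Q(\mathbb{D})\subset\{\mathrm{Re}\,w>-1\}$, with the extremal $g_0(z)=z/(1-z)$ attaining the left boundary in the sense $Q(z)\to-1$ as $z\to-1$ (while $\mathrm{Re}\,Q\to+\infty$ as $z\to 1$). For $f\in\mathcal{G}$ the corresponding sharp subordination is $zf'(z)/f(z)\prec 2(1-z)/(2-z)$, so $P(\mathbb{D})$ lies in the open disc $|w+1/3|<2/3$; in particular $\mathrm{Re}(P)\in(-1,1/3)$, and the extremal $f_0(z)=z-z^2/2$ realises $P(z)\to-1$ as $z\to 1$ and $P(z)\to 1/3$ as $z\to-1$.

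For the sufficiency direction I would split on the sign of $\alpha$. When $\alpha,\beta\ge 0$, the estimates $\mathrm{Re}\,P>-1$ and $\mathrm{Re}\,Q>-1$ yield $\mathrm{Re}(1+\alpha P+\beta Q)>1-\alpha-\beta$, which is non-negative precisely when $\alpha+\beta\le 1$. When $\alpha<0\le\beta$, combining $\mathrm{Re}\,P<1/3$ with $\mathrm{Re}\,Q>-1$ yields $\mathrm{Re}(1+\alpha P+\beta Q)>1+\alpha/3-\beta$, non-negative precisely when $3\beta-\alpha\le 3$. The condition $\beta\ge 0$ is forced because $\mathrm{Re}\,Q$ is unbounded above, and $\beta\le 1$ follows as the $\alpha=0$ endpoint of either linear bound.

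For necessity I would exploit rotations: $f_\theta(z):=e^{-i\theta}f_0(e^{i\theta}z)\in\mathcal{G}$ gives $P_\theta(z)=-e^{i\theta}z/(2-e^{i\theta}z)$, and $g_\phi(z):=e^{-i\phi}g_0(e^{i\phi}z)\in\mathcal{K}$ gives $Q_\phi(z)=2e^{i\phi}z/(1-e^{i\phi}z)$. Choosing $\theta,\phi$ so that the respective boundary extrema occur at a common point $z_0\in\partial\mathbb{D}$ drives $\mathrm{Re}(1+\alpha P_\theta+\beta Q_\phi)$ along a radial path to either $1-\alpha-\beta$ (aligning $P\to-1$ with $Q\to-1$) or $1+\alpha/3-\beta$ (aligning $P\to 1/3$ with $Q\to-1$), so any violation of the listed linear inequalities destroys convexity near $\partial\mathbb{D}$. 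The principal obstacle is establishing the sharp subordination $zf'(z)/f(z)\prec 2(1-z)/(2-z)$ for $f\in\mathcal{G}$: while Ozaki's inclusion $\mathcal{G}\subset\mathcal{S}$ and pointwise estimates on $f/(zf')$ are standard, identifying the full disc image together with $f_0$ as extremal is a second-order differential subordination, which I would verify by checking that $p_0(z):=2(1-z)/(2-z)$ is univalent and satisfies $p_0+zp_0'/p_0=(1-2z)/(1-z)$, whose real part stays below $3/2$ on $\mathbb{D}$, and then invoking a Miller--Mocanu admissibility lemma. With that lemma in hand, the remaining case analysis on $\mathrm{sgn}(\alpha)$ is routine.
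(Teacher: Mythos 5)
Your proposal is correct and follows essentially the same route as the paper: the same logarithmic-derivative decomposition of $1+zF''/F'$, the same case split on the sign of $\alpha$ using $0<\mathrm{Re}\,(zf'(z)/f(z))<4/3$ for $f\in\mathcal{G}$ together with $\mathrm{Re}\,(zg''(z)/g'(z)+1)>0$ for $g\in\mathcal{K}$, and the same extremal pairs for necessity (your rotated $f_0(z)=z-z^2/2$ and $g_0(z)=z/(1-z)$ are exactly the paper's $[1-(1-z)^2]/2$, $[(1+z)^2-1]/2$ and $z/(1+z)$). The only divergence is that the bound on $\mathrm{Re}\,(zf'/f)$ over $\mathcal{G}$, which you single out as the principal obstacle and propose to derive from a sharp subordination via a Miller--Mocanu admissibility lemma, is simply quoted in the paper as a known result of Ponnusamy and Rajasekaran \cite[Example 1, Eq.\ (16)]{ponnusamy95}, so that extra differential-subordination work is not needed.
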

\begin{proof}
The given hypothesis $g\in \mathcal{K}$ along with the relation \eqref{4eq2.2} provides us
\begin{equation}\label{4eq2.6}
{\rm Re}\,\bigg(1+\frac{zC_{\alpha,\beta}[f,g][f]''(z)}{C_{\alpha,\beta}[f,g]'(z)}\bigg) > \alpha{\rm Re}\,\bigg(\frac{zf'(z)}{f(z)}\bigg)+1-\alpha-\beta,
\end{equation}
for $\beta\geq 0$. 
		
If $\alpha\geq0$ then from \eqref{4eq2.5} and \eqref{4eq2.6} we have
$$	
{\rm Re}\,\bigg(1+\frac{zC_{\alpha,\beta}[f,g]''(z)}{C_{\alpha,\beta}[f,g]'(z)}\bigg)> 1-\alpha-\beta.
$$
This provides us $C_{\alpha,\beta}[f,g]\in\mathcal{K}$ for $\alpha+\beta\leq 1$.
		
Now, if $\alpha<0$ then again from \eqref{4eq2.5} and \eqref{4eq2.6} we obtain
$$	
{\rm Re}\,\bigg(1+\frac{zC_{\alpha,\beta}[f,g]''(z)}{C_{\alpha,\beta}[f,g]'(z)}\bigg)> \frac{4}{3}\alpha+1-\alpha-\beta=\frac{1}{3}\alpha+1-\beta.
$$
This gives that $C_{\alpha,\beta}[f,g]\in\mathcal{K}$ for $3\beta-\alpha\leq 3$.
		
Now, we show the sharpness of the result. 
For the choices $f(z)=z\in\mathcal{G}$ and $g(z)=z/(1+z)\in \mathcal{K}$, we have $C_{\alpha,\beta}[f,g](z)=\int_{0}^{z}(1+t)^{-2\beta} \in \mathcal{K}$ if and only if $0\leq\beta\leq1$ by using Lemma \ref{4lemma2.6}.

Further, we consider $f(z)=[1-(1-z)^2]/2\in\mathcal{G}$ and $g(z)=z/(1+z)$ then we obtain
$$	
1+\frac{zC_{\alpha,\beta}[f,g]''(z)}{C_{\alpha,\beta}[f,g]'(z)} =\frac{2-(1+\alpha)z}{2-z}-\frac{2\beta z}{1+z}.
$$
For $\alpha+\beta>1$, it is easy to see that $0<2/(\alpha+\beta+1)<1$. So if we choose $z=2/(\alpha+\beta+1)$ then we have
$$	
1+\frac{zC_{\alpha,\beta}[f,g]''(z)}{C_{\alpha,\beta}[f,g]'(z)} =\frac{\beta}{\alpha+\beta}-\frac{4\beta}{\alpha+\beta+3}=\frac{-3\beta(\alpha+\beta-1)}{(\alpha+\beta)(\alpha+\beta+3)}.
$$
The above calculation shows that 
$$
{\rm Re}\,\bigg(1+\frac{zC_{\alpha,\beta}[f,g]''(z)}{C_{\alpha,\beta}[f,g]'(z)}\bigg)<0,
$$
for $z=2/(\alpha+\beta+1)$. This implies that $C_{\alpha,\beta}[f,g]\notin\mathcal{K}$ for $\alpha+\beta>1$.
		
To complete our proof,  we choose $f(z)=[(1+z)^2-1]/2\in\mathcal{G}$ and $g(z)=z/(1+z)\in \mathcal{K}$. Then we get
$$	
1+\frac{zC_{\alpha,\beta}[f,g]''(z)}{C_{\alpha,\beta}[f,g]'(z)}=\frac{2+(1+\alpha)z}{2+z}-\frac{2\beta z}{1+z}.
$$
For $z=2/(3\beta-\alpha-1)$, we obtain
$$
{\rm Re}\,\bigg(1+\frac{zC_{\alpha,\beta}[f,g]''(z)}{C_{\alpha,\beta}[f,g]'(z)}\bigg)=\frac{3\beta}{3\beta-\alpha}-\frac{4\beta}{3\beta-\alpha+1}=\frac{\beta(\alpha-3\beta+3)}{(3\beta-\alpha)(3\beta-\alpha+1)}<0,
$$  
for $3\beta-\alpha-3>0$, $0<z<1$. It concludes that $C_{\alpha,\beta}[f,g]\notin\mathcal{K}$ for $3\beta-\alpha-3>0$,
completing the proof. 
\end{proof}
The following two consecutive theorems are the consequences of Theorem \ref{4theorem2.10}. We can obtain the proof of these theorems by a  similar process, which we are using in Theorem \ref{4theorem1.8} with the help of  Theorem \ref{4theorem2.10}.
\begin{theorem}\label{4theorem1.10}
For $\lambda<1$, let $f\in\mathcal{G}$ and $g\in \mathcal{K}(\lambda)$ then we have
\begin{enumerate}
\item[\bf (i)]$C_{\alpha,\beta}[f,g]\in  \mathcal{K}(\lambda)$ if and only if $-3(1-\lambda)\leq\alpha\leq (1-\lambda)$, $0\leq\beta\leq 1$, $ \alpha+\beta(1-\lambda)\leq (1-\lambda)$ and $3\beta(1-\lambda)-\alpha\leq 3(1-\lambda)$.
\item[\bf (ii)] $C_{\alpha,\beta}[f,g]\in  \mathcal{G}(\gamma)$, $\gamma>0$, if and only if $-\gamma/2\leq\alpha\leq 3\gamma/2$, $-\gamma/2\leq\beta(1-\lambda)\leq 0$, $ \alpha+\beta(1-\lambda)\geq -\gamma/2$ and $3\beta(1-\lambda)-\alpha\geq -\gamma/2$.
\end{enumerate}
\end{theorem}
\begin{theorem}\label{4theorem1.11}
For $\gamma>0$, let $f\in\mathcal{G}$ and $g\in \mathcal{G}(\gamma)$ then we have
\begin{enumerate}
\item[\bf (i)]$C_{\alpha,\beta}[f,g]\in \mathcal{K}(\lambda)$, $\lambda<1$ if and only if  $-3(1-\lambda)\leq\alpha\leq (1-\lambda)$, $-2(1-\lambda)/\gamma\leq\beta\leq 0$, $ \alpha-\beta\gamma/2\leq (1-\lambda)$ and $-3\beta\gamma/2-\alpha\leq 3(1-\lambda)$.
\item[\bf (ii)] $C_{\alpha,\beta}[f,g]\in\mathcal{G}(\gamma)$ if and only if $-\gamma/2\leq\alpha\leq 3\gamma/2$, $0\leq\beta\leq 1$, $ -2\alpha/\gamma+\beta\leq 1$ and $3\beta+2\alpha/\gamma\leq 3$.
\end{enumerate}
\end{theorem}
	
In view of the relation \eqref{4eq1.2}, Theorem \ref{4theorem1.10}(i) or Theorem \ref{4theorem1.11}(i) obtain the following corollary which may be of independent interest.
	
\begin{corollary}\label{4corollary1.13}
For $\alpha\in\mathbb{R}$ and $f\in\mathcal{G}$, $J_\alpha[f] \in \mathcal{K}(\lambda)$ if and only if $-3(1-\lambda)\leq\alpha\leq (1-\lambda)$.
\end{corollary}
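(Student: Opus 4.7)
The plan is to obtain the corollary as a direct specialization of Theorem~\ref{4theorem1.10}(ii) to $\beta=0$, using relation~\eqref{4eq1.2} to identify $J_\alpha[f]$ with $C_{\alpha,0}[f,g]$. Since the factor $(g'(w))^{\beta}$ with $\beta=0$ is identically $1$, the choice of $g$ is actually immaterial for $C_{\alpha,0}[f,g]$; however, to legitimately invoke Theorem~\ref{4theorem1.10}(ii) I need a concrete witness $g\in\mathcal{K}(\lambda)$. The natural choice is $g(z)=z$: here $1+zg''(z)/g'(z)\equiv 1>\lambda$ whenever $\lambda<1$, so $g\in\mathcal{K}(\lambda)$ as required.

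Next, I would substitute $\beta=0$ into the four inequalities characterizing $C_{\alpha,\beta}[f,g]\in\mathcal{K}(\lambda)$ in Theorem~\ref{4theorem1.10}(ii). The condition $0\leq\beta\leq 1$ reduces to the trivial $0\leq 0\leq 1$; the mixed conditions $\alpha+\beta(1-\lambda)\leq(1-\lambda)$ and $3\beta(1-\lambda)-\alpha\leq 3(1-\lambda)$ collapse respectively to $\alpha\leq(1-\lambda)$ and $\alpha\geq-3(1-\lambda)$, both already subsumed by the first inequality $-3(1-\lambda)\leq\alpha\leq(1-\lambda)$. Thus the four conditions fuse into the single two-sided bound claimed in the corollary, and the `if' direction is immediate.

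The only step requiring any thought is the `only if' direction: in principle, restricting Theorem~\ref{4theorem1.10}(ii) to the slice $\{\beta=0\}$ could fail to detect sharpness if the extremal constructions established in that theorem avoided this slice. I would therefore inspect the extremal $f$ (and the redundant $g$) used to prove necessity in Theorem~\ref{4theorem1.10}(ii) and confirm that, at $\beta=0$, they continue to show $J_\alpha[f]\notin\mathcal{K}(\lambda)$ as soon as $\alpha$ leaves $[-3(1-\lambda),(1-\lambda)]$. As a built-in cross-check, the same argument applied to Theorem~\ref{4theorem1.11}(ii) — with witness $g(z)=z\in\mathcal{G}(\gamma)$ (valid since $\mathrm{Re}(1+zg''/g')=1<1+\gamma/2$ for every $\gamma>0$) and $\beta=0$ — yields the identical interval for $\alpha$, confirming the conclusion independently.
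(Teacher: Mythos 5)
Your proposal is correct and follows essentially the same route as the paper, which likewise obtains the corollary by specializing Theorem~\ref{4theorem1.10}(ii) (or Theorem~\ref{4theorem1.11}(ii)) through the identification $J_\alpha[f]\equiv C_{\alpha,0}[f,g]\equiv C_{\alpha,\beta}[f,z]$ of relation~\eqref{4eq1.2}. Your extra worry about the ``only if'' direction is resolved even more simply than you suggest: since $C_{\alpha,0}[f,g]=J_\alpha[f]$ for \emph{every} $g$, the existential sharpness statement of Theorem~\ref{4theorem1.10}(ii) on the slice $\beta=0$ is literally the sharpness statement of the corollary, with no need to inspect the extremal functions.
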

With the help of \eqref{4eq1.2}, Theorem \ref{4theorem1.10}(ii) or in Theorem \ref{4theorem1.11}(ii) leads to the following  corollary.
\begin{corollary}\label{4corollary1.14}
For $\alpha\in\mathbb{R}$ and $f\in\mathcal{G}$, $J_\alpha[f] \in \mathcal{G}(\gamma)$ if and only if $-\gamma/2\leq\alpha\leq 3\gamma/2$.
\end{corollary}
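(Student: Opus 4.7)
The plan is to deduce Corollary~\ref{4corollary1.14} directly from Theorem~\ref{4theorem1.11}(iv) by specialising to the trivial choice $g(z)=z$ and the exponent $\beta=0$. The relation \eqref{4eq1.2} yields
\[
C_{\alpha,0}[f,g](z)=\int_{0}^{z}\bigg(\frac{f(w)}{w}\bigg)^{\alpha}dw=J_\alpha[f](z),
\]
and in particular $C_{\alpha,0}[f,z]=J_\alpha[f]$, so membership of $J_\alpha[f]$ in $\mathcal{G}(\gamma)$ is equivalent to membership of $C_{\alpha,0}[f,z]$ in $\mathcal{G}(\gamma)$.

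The first short step would be to check that the constant-derivative function $g(z)=z$ actually lies in $\mathcal{G}(\gamma)$ for every $\gamma>0$: since $g'(z)\equiv 1$ and $g''(z)\equiv 0$, the defining quantity $1+zg''(z)/g'(z)$ equals $1<1+\gamma/2$, and the normalisation is automatic. Together with the hypothesis $f\in\mathcal{G}$, this means the pair $(f,g)=(f,z)$ is admissible as input to Theorem~\ref{4theorem1.11}(iv).

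Next I would substitute $\beta=0$ into the four inequalities of Theorem~\ref{4theorem1.11}(iv). The first, $-\gamma/2\le\alpha\le 3\gamma/2$, survives untouched; the second, $0\le\beta\le 1$, becomes the trivially true $0\le 0\le 1$; the third, $-2\alpha/\gamma+\beta\le 1$, reduces to $\alpha\ge-\gamma/2$; and the fourth, $3\beta+2\alpha/\gamma\le 3$, reduces to $\alpha\le 3\gamma/2$. The last two are therefore subsumed by the first, and the whole set of conditions collapses to $-\gamma/2\le\alpha\le 3\gamma/2$. Combining with the identity $C_{\alpha,0}[f,z]=J_\alpha[f]$ closes both the ``if'' and ``only if'' directions, the sharpness being inherited from the sharpness of Theorem~\ref{4theorem1.11}(iv).

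I expect no real obstacle: the argument is purely a substitution $\beta=0$, $g(z)=z$ into a theorem that has already been proved, together with the bookkeeping that shows three of the four constraints are redundant on the $\alpha$-interval. The only point requiring any care is the verification that $z\in\mathcal{G}(\gamma)$ so that Theorem~\ref{4theorem1.11}(iv) is applicable; after that, the route via Theorem~\ref{4theorem1.10}(iv) proceeds analogously with $g(z)=z\in\mathcal{K}(\lambda)$ for any $\lambda<1$, and I would only use Theorem~\ref{4theorem1.11}(iv) for the exposition since the redundancy check is marginally cleaner.
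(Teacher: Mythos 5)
Your argument is correct and is exactly the paper's intended route: the paper obtains Corollary~\ref{4corollary1.14} by applying relation \eqref{4eq1.2} (i.e.\ taking $g(z)=z$, equivalently $\beta=0$, so that $C_{\alpha,\beta}[f,z]=C_{\alpha,0}[f,g]=J_\alpha[f]$) to Theorem~\ref{4theorem1.10}(iv) or Theorem~\ref{4theorem1.11}(iv), which is precisely your substitution, including the check that the remaining constraints collapse to $-\gamma/2\le\alpha\le 3\gamma/2$. The one point worth emphasizing is that working with $\beta=0$ (rather than only $g=z$) is what makes the ``only if'' direction transfer cleanly, since $C_{\alpha,0}[f,g]$ is independent of $g$, so any extremal pair witnessing the sharpness of Theorem~\ref{4theorem1.11}(iv) at $(\alpha,0)$ already produces an extremal $f$ for the corollary.
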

	

\section{Pre-Schwarzian Norms}
The pre-Schwarzian norm of a function $f\in \mathfrak{F}$ is defined as
$$
\|f\|=\sup_{z\in\mathbb{D}}\,(1-|z|^2)\left|\frac{f''(z)}{f'(z)}\right|
$$
where the quantity $f''/f'$ is often referred to as the pre-Schwarzian derivative of $f$. It is well-known that $\|f\|\le 6$ for $f\in\mathcal{S}$ as well as for $f\in\mathcal{S}^*$ and, 
The sharp estimation $\|f\|\le 4$, for $f\in\mathcal{K}$, was later generalized by Yamashita \cite{Yamashita99} to the class $\mathcal{K}(\lambda)$, $0\le \lambda<1$. Recently, in \cite{Ali18}, Yamashita's result has been further extended to
$\mathcal{K}(\lambda)$, $-1/2\le \lambda<1$. However, Kumar and Sahoo in \cite{Shankey19} proved that the result of Yamashita holds true for all $\lambda<1$. Furthermore, $\|f\|\leq 2\gamma$ for $f\in \mathcal{G}(\gamma)$, $0<\gamma\leq1$, is obtained in \cite{Ali18}, and it is also true for $\gamma>0$ explain in the next theorem. 

\begin{theorem}\label{4theorem4.1}
For $\gamma>0$, if  $f\in \mathcal{G}(\gamma)$ then $\|f\|\leq 2\gamma$ and the bound is sharp.
\end{theorem}
\begin{proof}
As observed in Lemma \ref{4lemma2.10}, the functions in $\mathcal{G}(\gamma)$ can be expanded in terms of the Hornich scalar multiplication: 
$(-\gamma/2)\star \mathcal{K}=\{(-\gamma/2)\star g : g\in \mathcal{K} \}$. Then for $f\in \mathcal{G}(\gamma)$ there exist a $g\in \mathcal{K}$ such that $f=(-\gamma/2)\star g$. Therefore, $\|f\|=(\gamma/2)\|g\|\leq 2\gamma$. We can easily obtain the sharpness by considering the example 
$$
g(z)=\frac{1}{1+\gamma}[1-(1-z)^{1+\gamma}], \,\ z\in\mathbb{D}.
$$
Hence conclude.
\end{proof}
The next theorem is a consequence of Lemma \ref{4lemma2.4} and Theorem \ref{4theorem4.1}.
\begin{theorem}\label{4theorem4.2}
For each $\theta\in(-\pi/2,\pi/2)$ and $\gamma>0$, the sharp inequality $\|f\|\leq 2\gamma\cos\theta$ holds for $f\in J(\mathcal{S}_{\theta}(\gamma))$.
\end{theorem}
\begin{proof}
It is easy to calculate that $\|I_\lambda(f)\|=|\lambda|\|f\|$. 
Secondly, By Lemma \ref{4lemma2.4} for $f \in J(\mathcal{S}_{\theta}(\gamma))$ there exists a function $k \in \mathcal{G}(\gamma)$ such that $f=I_{e^{-i\theta}\cos\theta}[k]$. Then by using Theorem \ref{4theorem4.1} we obtained that $\|f\|=|\cos \theta|\|k\|\leq2\gamma\cos \theta$.

To conclude the final part, we consider the function 
$$
g_\theta(z)=z(1-z)^{\gamma e^{-i\theta}\cos\theta}\in \mathcal{S}_{\theta}(\gamma),
$$
where $\theta\in(-\pi/2,\pi/2)$ and $\gamma>0$. After applying the Alexander transform over the function $g_\theta$, we can easily obtained that $\|J[g_\theta]\|=2\gamma\cos\theta$.
This proves the sharpness part.	
\end{proof}
\begin{remark} It is proved in \cite{Kim02} that $f\in \mathfrak{F}$ is bounded if $\|f\|<2$ and bound depends only on the value of $\|f\|$. 
From Theorem \ref{4theorem4.2}, we can easily see that for every $\theta\in(-\pi/2,\pi/2)$ and $0<\gamma<1$, the inequality $\|f\|< 2$ holds for $f\in J(\mathcal{S}_{\theta}(\gamma))$. Hence, a function in the class  $J(\mathcal{S}_{\theta}(\gamma))$, where $\theta\in(-\pi/2,\pi/2)$ and $0<\gamma<1$,  is bounded by a constant depending on $\theta$ and $\gamma$.
\end{remark}

\medskip
\noindent
{\bf Conflict of Interests.} Author declare that there is no conflict of interests 
regarding the publication of this paper.
	
\bigskip
\noindent
{\bf Acknowledgment.} I would like to thank my Ph.D. supervisor Prof. Swadesh Kumar Sahoo for his helpful remarks.
The work of the author is supported by CSIR, New Delhi (Grant No: 09/1022(0034)/2017-EMR-I).

\end{document}